\numberwithin{equation}{section}
\numberwithin{figure}{section}
\theoremstyle{plain}
\newtheorem{theorem}{Theorem}[section]
\newtheorem{lemma}[theorem]{Lemma}
\theoremstyle{remark}
\newtheorem*{remark}{Remark}
\newtheorem*{ack}{Acknowledgement}
\theoremstyle{definition}
\newcommand{\R}{\mathbb{R}}
\newcommand{\N}{\mathbb{N}}
\newcommand{\HH}{\mathcal{H}}
\newcommand{\iii}{\mathtt{i}}
\newcommand{\dimh}{\textrm{dim}_H}
\newcommand{\pallo}{\circ}
\newcommand{\pois}{\backslash}
\newcommand{\eps}{\varepsilon}
\newcommand{\khii}{\text{\lower -.4ex\hbox{$\chi$}}}
\renewcommand{\emptyset}{\textrm{\O}}
\newcommand{\fii}{\varphi}
\newcommand{\Fii}{\Phi}
\begin{document}

\title[On the geometric structure of the limit set of CIFS]{On
the geometric structure of the limit set of conformal iterated
function systems}

\author{Antti K\"aenm\"aki}
\address{Department of Mathematics and Statistics \\
         P.O. Box 35 (MaD) \\
         FIN-40014 University of Jyv\"askyl\"a \\
         Finland}
\email{antakae@maths.jyu.fi\newpage\quad \thispagestyle{empty}}
\newpage\quad \thispagestyle{empty}

\thanks{Author is supported by the Academy of Finland, project 23795, and
during his stay in Universitat Aut\`onoma of Barcelona he was
supported by the Marie Curie Fellowship, grant
HPMT-CT-2000-00053. He also wishes to thank the UAB, where this paper
was completed, for warm hospitality.}
\subjclass{Primary 28A80; Secondary 37C45.}
\date{20th December, 2001.}

\begin{abstract}
We consider infinite conformal iterated function systems on $\R^d$. We
study the geometric structure of the limit set of such systems.
Suppose this limit set intersects some $l$--dimensional
$C^1$--submanifold with positive Hausdorff $t$--dimensional
measure, where $0<l<d$ and $t$ is the Hausdorff dimension of the
limit set. We then show that the closure of the limit set
belongs to some $l$--dimensional affine subspace or geometric sphere
whenever $d$ exceeds $2$ and analytic curve if $d$ equals $2$.
\end{abstract}

\maketitle

\section{Introduction}

We work on the setting introduced by Mauldin and Urba\'nski in
\cite{mu}. There they consider uniformly contractive countable
collections of conformal injections defined on some open, bounded
and connected set $\Omega \subset \R^d$. It is needed that there
exists some compact set $X \subset \Omega$ with non--empty
interior such that each contraction maps this set to some subset
of itself. For this kind of setting, even without the conformality
assumption, there is always so called limit set associated. We
denote it with $E$ and we are particularly interested in the
properties of this set. The conformality assumption is basically
needed for good behavior of the derivatives. As usual, open set
condition (OSC), introduced by Moran in \cite{mo}, is used for
getting decent separation for those previously mentioned subsets
of $X$. We also need bounded distortion property (BDP),
which says that the value of the norm of derivatives cannot vary
too much. This is actually just a consequence of our previous assumptions.
And finally the boundary of $X$ is assumed to be ``smooth''
enough. For example all convex sets are like that. From this
property one may verify that the limit set is a Borel set. In the
finite case it is always compact. Together with OSC it also
guarantees some properties for the natural Borel regular
probability measure, so called conformal measure, associated to
this set.

This is one way to generalize similar kind of situation for finite
collection of similitudes; the setting introduced by Hutchinson in
\cite{hu}. Mattila proved in \cite{ma1} for the limit set $E$ of this
kind of setting the following result: Either $E$ lies on an
$l$--dimensional affine subspace or $\HH^t(E \cap M)=0$ for every
$l$--dimensional $C^1$--submanifold $M \subset \R^d$. Here $0<l<d$
and $\HH^t$ denotes the $t$--dimensional Hausdorff measure, where
$t=\dimh(E)$, the Hausdorff dimension of the set $E$. The main
result in this note is a generalization for this. Our approach is
based on an extensive use of tangents. Before going into more
detailed preliminaries we should mention that Springer has proved
in \cite{spr} similar result in the plane and Mauldin, Mayer and
Urba\'nski have studied similar behavior for connected limit sets
in \cite{mmu1} and \cite{mmu2}.

As usual, let $I$ be a countable set with at least two elements.
Put $I^* = \bigcup_{n=1}^\infty I^n$ and $I^\infty = I^\N = \{
(i_1,i_2,\ldots) : i_j \in I \text{ for } j \in \N \}$. Thus, if
$\iii \in I^*$ there is $k \in \N$ such that $\iii =
(i_1,\ldots,i_k)$, where $i_j \in I$ for all $j=1,\ldots,k$. We
call this $k$ as the \emph{length} of $\iii$ and we denote
$|\iii|=k$. If $\iii \in I^\infty$ we denote $|\iii|=\infty$. For
$\iii \in I^* \cup I^\infty$ we put $\iii|_k = (i_1,\ldots,i_k)$
whenever $1 \le k < |\iii|$.

Choose $\Omega$ to be open, bounded and connected set on $\R^d$.
Now for each $i \in I$ we define an injective mapping $\fii_i :
\Omega \to \Omega$ such that it is \emph{contractive}, that is,
there exists $0<s_i<1$ such that
\begin{equation} \label{eq:contactive}
  |\fii_i(x) - \fii_i(y)| \le s_i |x-y|
\end{equation}
whenever $x,y \in \Omega$. A mapping with equality in
(\ref{eq:contactive}) is called \emph{similitude}. We assume also
that $\fii_i$ is \emph{conformal}, that is, $|\fii_i'|^d =
|J\fii_i|$, where $J$ stands for normal Jacobian and the norm in the
left side is just the normal ``sup--norm'' for linear mappings. Here
the derivative exists $\HH^d$--almost all points using Rademacher's
theorem. This definition for conformality is usually better known
as $1$--quasiconformality; see V\"ais\"al\"a \cite{v}. Note that
a conformal mapping is always $C^\infty$ by theorem 4.1 of
Reshetnyak \cite{res}. We assume also that there exists a compact set $X$
with $\text{int}(X) \ne \emptyset$ such that $\fii_i(X) \subset X$
for every $i \in I$. The use of the bounded set $\Omega$ here is
essential, since conformal mappings contractive in whole $\R^d$
are similitudes, as $d$ exceeds 2. We call a collection $\{ \fii_i
: i \in I \}$ as \emph{conformal iterated function system} (CIFS)
if the following conditions (1)--(4) are satisfied:
\begin{itemize}
  \item[(1)] Mappings $\fii_i$ are \emph{uniformly contractive}, that is,
  $s:=\sup_{i \in I}s_i < 1$.
\end{itemize}
Denoting $\fii_\iii = \fii_{i_1} \pallo \cdots \pallo
\fii_{i_{|\iii|}}$ for each $\iii \in I^*$, we get from this
property that for every $n \in \N$
\begin{equation}
  d\bigl( \fii_{\iii|_n}(X) \bigr) \le s^n d(X)
\end{equation}
whenever $\iii \in I^\infty$. Here $d$ means the diameter of a
given set. Now we may define a mapping $\pi : I^\infty \to X$ such
that
\begin{equation}
  \bigl\{ \pi(\iii) \bigr\} = \bigcap_{n=1}^\infty
  \fii_{\iii|_n}(X).
\end{equation}
We set
\begin{equation}
  E = \pi(I^\infty) = \bigcup_{\iii \in I^\infty}
  \bigcap_{n=1}^\infty \fii_{\iii|_n}(X)
\end{equation}
and we call this set as the \emph{limit set} of the corresponding
CIFS. Our aim is to study this set. Observe that $E$ satisfies the
natural invariance equality, $E = \bigcup_{i \in I} \fii_i(E)$.
\begin{itemize}
  \item[(2)] \emph{Bounded distortion property} (BDP), that is, there
  exists $K \ge 1$ such that $|\fii_\iii'(x)| \le
  K|\fii_\iii'(y)|$ for every $\iii \in I^*$ and $x,y \in \Omega$.
\end{itemize}
For a finite collection of mappings, this is just a consequence of
smoothness (see lemma 2.2 of \cite{mu}) and in the infinite case it
follows from Koebe's distortion theorem as $d$ equals $2$ and
theorem 1.1 of \cite{u} whenever $d$ exceeds $2$.
Using these assumptions we may prove that each mapping $\fii_i$ is a
diffeomorphism and that there exists $D \ge 1$ such that
\begin{equation} \label{eq:bigD}
  D^{-1} \le \frac{d\bigl( \fii_\iii(E) \bigr)}{||\fii_\iii'||}
  \le D
\end{equation}
for every $\iii \in I^*$. Here $||\fii_\iii'|| = \sup_{x \in
\Omega} |\fii_\iii'(x)|$.
\begin{itemize}
  \item[(3)] \emph{Open set condition} (OSC) holds for $\text{int}(X)$,
  that is, $\fii_i\bigl( \text{int}(X) \bigr) \cap
  \fii_j\bigl( \text{int}(X) \bigr) = \emptyset$ for every $i \ne
  j$.
  \item[(4)] There exists $r_0 > 0$ such that
  \begin{equation}
    \inf_{x \in \partial X} \inf_{0<r<r_0} \frac{\HH^d\bigl( B(x,r) \cap
    \text{int}(X) \bigr)}{\HH^d\bigl( B(x,r) \bigr)} > 0,
  \end{equation}
  where $\partial X$ is the boundary of the set $X$.
\end{itemize}
We should mention that in \cite{mu}, instead of assumption (4), it
was used so called cone condition, which says that for each
boundary point $x$ there exists some ``cone'' in the interior of
$X$ with vertex $x$. However, assumption (4) is sufficient for our
setting, as was remarked also in \cite{mu}. Using these assumptions
it follows that $E$ is a Borel set. Suppose there exists a
Borel regular probability measure $m$ on $E$ such that
\begin{equation} \label{eq:confmeas}
  m\bigl( \fii_\iii(A) \bigr) = \int_A |\fii_\iii'(x)|^t dm(x),
\end{equation}
where $t=\dimh(E)$, $A \subset X$ is a Borel set and $\iii \in I^*$.
Then OSC and assumption (4) are crucial to derive that $m\bigl(
\fii_i(X) \cap \fii_j(X) \bigr) = 0$ for every $i \ne j$ (see
section 3 of \cite{mu} for details). If this measure exists, we call it a
\emph{$t$--conformal measure} and the corresponding CIFS
\emph{regular}. Observe that finite CIFS's are always regular (see
section 3 of \cite{mu} for details). If we consider measure theoretical
Jacobian $J_m$ for function $\fii_\iii$ defined as
\begin{equation}
  J_m\fii_\iii(x) = \lim_{r\searrow 0}
  \frac{m\bigl( \fii_\iii\bigl( B(x,r) \bigr) \bigr)}
       {m\bigl( B(x,r) \bigr)}
\end{equation}
for each point $x \in E$, we notice using conformality of
$\fii_\iii$ and (\ref{eq:confmeas}) that $J_m\fii_\iii^{-1}\bigl(
\fii_\iii(x) \bigr) = \bigl( J_m\fii_\iii(x) \bigr)^{-1} =
|\fii_\iii'(x)|^{-t} = \bigl|\bigl( \fii_\iii^{-1} \bigr)'\bigl(
\fii_\iii(x) \bigr)\bigr|^t$ for $m$--almost all $x \in E$ and for
all $\iii \in I^*$ (recall also theorem 2.12 of \cite{ma2}). Thus
for example, using BDP
\begin{equation} \label{eq:inverse}
  m\bigl( \fii_\iii^{-1}(A) \bigr) =
  \int_A |(\fii_\iii^{-1})'(x)|^t dm(x)
  \le ||(\fii_\iii^{-1})'||^t m(A)
  \le K^t ||\fii_\iii'||^{-t} m(A)
\end{equation}
whenever $A \subset \fii_\iii(X)$ is Borel. Furthermore by setting
$\Fii|_{\fii_i(X)}(x) = \fii_i^{-1}(x)$ for all $i \in I$ we get
at $m$--almost every point defined mapping $\Fii : \bigcup_{i \in
I} \fii_i(X) \to X$ for which
\begin{equation}
  m\bigl( \Fii(A) \bigr) = \int_A |\Fii'(x)|^t dm(x).
\end{equation}
In fact, $m$ is ergodic and equivalent to some invariant (with
respect to the function $\Fii$) measure (see section 3 of \cite{mu}
for details).
In the finite case, the $t$--conformal measure is \emph{Ahlfors
regular}, that is, there exists $C \ge 1$ such that
\begin{equation}
  C^{-1} \le \frac{m\bigl( B(x,r) \bigr)}{r^t} \le C
\end{equation}
for all $x \in E$ and $r>0$ small enough (see lemma 3.14 of
\cite{mu}).

Let $0<l<d$ be an integer and $G(d,l)$ be the collection of all
$l$--dimensional subspaces of $\R^d$. We denote by $P_V$ the
orthogonal projection onto $V \in G(d,l)$ and we put $Q_V = P_{V^\bot}$,
where $V^\bot$ is the orthogonal complement of $V$. For $x \in
\R^d$ we denote $V+x = \{ v+x : v \in V \}$. If $a \in \R^d$, $V
\in G(d,l)$ and $0<\delta<1$ we set
\begin{align}
  X(a,V,\delta) &= \{ x \in \R^d : |Q_V(x-a)| < \sqrt\delta|x-a| \} \\
  &= \{ x \in \R^d : d(x,V+a) < \sqrt\delta|x-a| \}. \notag
\end{align}
Note that the closure of $X(a,V,\delta)$ equals to the complement
of $X(a,V^\bot,1-\delta)$. We say that $V$ is a
\emph{$(t,l)$--tangent plane} for $E$ at $a$ if
\begin{equation}
  \lim_{r \searrow 0} \frac{m\bigl( B(a,r) \pois X(a,V,\delta) \bigr)}{r^t} = 0
\end{equation}
for all $0<\delta<1$. Furthermore we say that $V$ is a
\emph{strong $l$--tangent plane} for given set $A \subset \R^d$ at
$a$ if for every $0<\delta<1$ there exists $r > 0$ such that
\begin{equation}
  A \cap B(a,r) \subset X(a,V,\delta).
\end{equation}
For example, an $l$--dimensional $C^1$--submanifold has a strong
$l$--tangent plane at all of it's points. Observe that these two
tangents are exactly the same for $E$ in the case of Ahlfors
regular $m$. However, we shall not need this fact here. Recall
also that $t$--dimensional \emph{upper density} of a measure $\mu$
at $a$ is defined as
\begin{equation}
  \Theta^{*t}(\mu,a) = \limsup_{r \searrow 0}
  \frac{\mu\bigl( B(a,r) \bigr)}{r^t}.
\end{equation}

\section{Main result}

The main result of this note is the following theorem.

\begin{theorem}
  Suppose CIFS is given, $t=\dimh(E)$ and $0<l<d$. Then either
  $\HH^t(E \cap M) = 0$ for every $l$--dimensional $C^1$--submanifold
  $M \subset \R^d$ or the closure of $E$ is contained in some
  $l$--dimensional affine subspace or $l$--dimensional geometric
  sphere whenever $d$ exceeds $2$ and analytic curve if $d$ equals $2$.
\end{theorem}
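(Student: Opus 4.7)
The plan is to prove the contrapositive: assume $\HH^t(E \cap M) > 0$ for some $l$-dimensional $C^1$-submanifold $M \subset \R^d$ and deduce that $\overline{E}$ lies in an $l$-dimensional affine subspace, an $l$-dimensional geometric sphere, or, when $d=2$, an analytic curve. The hypothesis already forces $t \leq l$, since $\HH^t(M)=0$ when $t>l$. By standard density theorems for Hausdorff measure, $\HH^t$-almost every $a \in E \cap M$ has positive upper $t$-density of $\HH^t \restrict (E \cap M)$; being $C^1$, $M$ carries a strong $l$-tangent plane $V = T_a M$ at every such $a$. Using the conformal-measure equation (\ref{eq:confmeas}), BDP, and (\ref{eq:inverse}) to compare $m$ with $\HH^t \restrict E$ on small scales, one then checks that $V$ is a $(t,l)$-tangent plane of $E$ at $a$, so that $E$ is measure-theoretically flat along $V$ near $a$.

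Next I would propagate $V$ along the dynamics. Each $\fii_\iii$ is conformal, so $\fii_\iii'(a)$ is a similarity of $\R^d$; a direct change of variables in the definition of a $(t,l)$-tangent plane (using (\ref{eq:confmeas}) and the diameter bound (\ref{eq:bigD})) shows that $\fii_\iii'(a) V$ is a $(t,l)$-tangent plane of $E$ at $\fii_\iii(a)$. For any fixed $a \in E$ the family $\{\fii_\iii(a) : \iii \in I^*\}$ is dense in $E$, since $\pi(\iii) = \lim_n \fii_{\iii|_n}(a)$ by the contraction property. Thus $E$ acquires a densely defined field of $(t,l)$-tangent planes, each obtained from $V$ by a conformal linear map, transforming consistently under the CIFS derivatives.

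The final step is a rigidity conclusion. When $d > 2$, Liouville's theorem says every conformal map on an open subset of $\R^d$ is the restriction of a M\"obius transformation, and M\"obius transformations permute the family $\mathcal{F}_l$ of $l$-dimensional affine subspaces and $l$-dimensional geometric spheres. I would associate to $a$ an element $S_0 \in \mathcal{F}_l$ through $a$ with tangent $V$ that osculates $E$ to sufficient order (built from conformal blow-ups $\fii_\iii^{-1}(E)$ in the spirit of tangent measures), push it forward along $\fii_\iii$ to obtain $S_\iii := \fii_\iii(S_0) \in \mathcal{F}_l$ tangent to $E$ at $\fii_\iii(a)$, and then observe that two distinct elements of $\mathcal{F}_l$ cannot share a relatively open portion. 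Combined with the density of $\{\fii_\iii(a)\}$ in $E$ and the $(t,l)$-tangency of $E$ along each $S_\iii$, this forces every $S_\iii$ to coincide with a single global $S \in \mathcal{F}_l$ and to contain $E$; taking closures gives $\overline{E} \subset S$. When $d=2$ Liouville fails, but every conformal map is holomorphic, and the same propagation scheme combined with uniqueness of analytic continuation forces $\overline{E}$ onto a single analytic curve.

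The main obstacle is the rigidity step. A $(t,l)$-tangent plane at a single point is only first-order data, whereas a geometric sphere carries curvature, so $S_0$ is not literally determined by $V$ alone; one must extract the missing second-order information from the nonlinear behavior of the conformal derivatives $\fii_i'$ (which by Liouville encode exactly the M\"obius class of the blow-up), and then verify that the spheres $S_\iii$ remain genuinely close to $E$ uniformly in $|\iii|$. Controlling distortion uniformly across arbitrarily long compositions, via BDP and (\ref{eq:bigD}), is where the infinite-CIFS assumptions do the real work.
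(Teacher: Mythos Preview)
Your reduction to a single $(t,l)$-tangent plane at some $a\in E$ matches the paper's lemma, and the final appeal to Liouville is the same. The gap is in the middle step, and you identify it yourself: the ``rigidity'' argument is not carried out, and I do not see how to make it work along the lines you sketch. Forward propagation gives, at each $\fii_\iii(a)$, only the first-order datum $\fii_\iii'(a)V\in G(d,l)$; through a given point with a prescribed $l$-tangent there is a whole family of $l$-spheres, so there is no canonical $S_\iii$. Even if you fix $S_0$ by some osculation condition at $a$ and then set $S_\iii=\fii_\iii(S_0)$, different words $\iii$ are different M\"obius maps and the images $S_\iii$ will in general be distinct spheres, each merely tangent to $E$ at a single point. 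Nothing in your outline forces them to coincide, and tangency on a dense set does not pin down a unique element of $\mathcal{F}_l$.

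The paper bypasses this by running the blow-up you allude to \emph{globally} rather than using it only to fix curvature at $a$. Take $\iii\in I^\infty$ with $\pi(\iii)=a$, choose $n_j$ so that $\fii_{\iii|_{n_j}}(E)$ has diameter comparable to a radius $r_j$ on which the cone estimate holds with aperture $1/j$ and error $\eps$, and renormalise by the similarity $\psi_j(x)=\|\fii_{\iii|_{n_j}}'\|^{-1}(x-a)+a$. The maps $F_j=\psi_j\circ\fii_{\iii|_{n_j}}:\Omega\to\R^d$ are conformal and, by BDP, uniformly bi-Lipschitz with constants $K^{-1}$ and $1$; Ascoli--Arzel\`a yields a conformal limit $F$. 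A direct estimate using (\ref{eq:inverse}) and (\ref{eq:bigD}) gives $m\bigl(E\setminus F_j^{-1}(X(a,V,\tfrac{1}{j}))\bigr)\le(2DK)^t\eps$, hence $m\bigl(E\setminus F^{-1}(V+a)\bigr)=0$, and density of full-measure sets then puts $\overline{E}\subset F^{-1}(V+a)$. The single limit map $F^{-1}$ carries the flat model $V+a$ onto one $l$-sphere (or analytic curve when $d=2$) containing all of $E$; there is no field of spheres to reconcile, which is exactly the obstacle your forward-propagation scheme cannot overcome.
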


Using this theorem we are able to find minimal amount of essential
directions for where the set $E$ is spread out. It also follows
that if $t$ is an integer, then the limit set is always either
$t$--rectifiable or purely $t$--unrectifiable. See Mattila
\cite{ma2} for definitions and properties for these concepts.
Provided that $d$ exceeds $2$, it is also easy to see that if at
least one of our conformal mappings is similitude, the
latter case of the theorem concerns only affine
subspaces.

The proof is divided into three parts. We start with an easy lemma
which provides a useful dichotomy for our purposes.

\begin{lemma}
  Suppose CIFS is given, $t=\dimh(E)$ and $0<l<d$. Then either
  $\HH^t(E \cap M) = 0$ for every $l$--dimensional $C^1$--submanifold
  $M \subset \R^d$ or the system is regular and at least one point of
  $E$ has a $(t,l)$--tangent plane.
\end{lemma}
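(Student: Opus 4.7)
The plan is to argue by contrapositive: assume the first alternative fails and fix an $l$--dimensional $C^1$--submanifold $M \subset \R^d$ with $\HH^t(E \cap M) > 0$. The goal is to deduce both that the system is regular and that some point of $E$ admits a $(t,l)$--tangent plane.

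First I would handle regularity. Since $\HH^t(E) \ge \HH^t(E \cap M) > 0$, the Mauldin--Urba\'nski theory \cite{mu} forces the system to be regular, so a $t$--conformal measure $m$ exists; moreover $m$ and $\HH^t|_E$ are mutually absolutely continuous, which gives $m(E \cap M) > 0$. This step is indispensable since the very notion of a $(t,l)$--tangent plane presupposes the existence of $m$.

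Next I would isolate a suitable point. Because $m$ is a finite Radon measure on $\R^d$, the Besicovitch differentiation theorem produces an $m$--full measure subset of $E \cap M$ consisting of density points, namely points $x$ satisfying $m\bigl( B(x,r) \pois (E \cap M) \bigr) / m\bigl( B(x,r) \bigr) \to 0$. Standard estimates using BDP and conformality (cf.~\cite{mu}) also yield $\Theta^{*t}(m,x) < \infty$ at $m$--almost every $x \in E$. Pick any $x \in E \cap M$ enjoying both properties, and let $V \in G(d,l)$ be the classical tangent plane of $M$ at $x$; since $M$ is $C^1$, $V$ is a strong $l$--tangent plane for $M$ at $x$.

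The final step is to verify that $V$ is a $(t,l)$--tangent plane for $E$ at $x$. Fix $0 < \delta < 1$. Strong tangency gives $M \cap B(x,r) \subset X(x,V,\delta)$ for all sufficiently small $r$, so also $(E \cap M) \cap B(x,r) \subset X(x,V,\delta)$, whence $B(x,r) \pois X(x,V,\delta) \subset B(x,r) \pois (E \cap M)$. Consequently
\begin{equation*}
  \frac{m\bigl( B(x,r) \pois X(x,V,\delta) \bigr)}{r^t}
  \le \frac{m\bigl( B(x,r) \pois (E \cap M) \bigr)}{m\bigl( B(x,r) \bigr)}
      \cdot \frac{m\bigl( B(x,r) \bigr)}{r^t};
\end{equation*}
the first factor tends to $0$ by the density property and the second stays bounded, so the left side tends to $0$ as required. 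The main obstacle, in my view, is the opening move: extracting regularity and the comparability of $m$ with $\HH^t|_E$ from $\HH^t(E) > 0$ is where all the heavy machinery of \cite{mu} is invoked, while the subsequent density--point argument is essentially routine.
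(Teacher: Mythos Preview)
Your argument is correct and follows essentially the same path as the paper's: assume $\HH^t(E\cap M)>0$, invoke \cite{mu} for regularity, and then combine a density property of $m$ at a generic point of $E\cap M$ with the strong $l$--tangent of $M$ there. The only cosmetic difference is that the paper cites Federer's 2.10.19(4) to obtain $\Theta^{*t}(m|_{E\setminus M},a)=0$ directly at $\HH^t$--almost every $a\in E\cap M$, which spares your separate appeal to $\Theta^{*t}(m,\cdot)<\infty$.
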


\begin{proof}
  Assume $\HH^t(E \cap M) > 0$ for some $M$.  Since $\HH^t(E)<\infty$,
  the regularity of the system is
  guaranteed by theorem 4.17 of \cite{mu}. From 2.10.19(4) of
  Federer \cite{fe} we get that $\Theta^{*t}(m|_{E\pois M},x) = 0$ for
  $\HH^t$--almost every $x \in E \cap M$. Let $a \in E \cap M$ be
  such point and $V \in G(d,l)$ be a strong $l$--tangent plane for
  $M$ at $a$. This means that for any given $0<\delta<1$ there
  exists $r_\delta > 0$ such that
  \begin{equation}
    M \cap B(a,r) \subset X(a,V,\delta)
  \end{equation}
  whenever $r<r_\delta$. Thus for every $0<\delta<1$
  \begin{align}
    \limsup_{r \searrow 0} \frac{m\bigl( B(a,r)\pois X(a,V,\delta) \bigr)}{r^t}
    &\le \limsup_{r \searrow 0} \frac{m\bigl( B(a,r)\pois\bigl( M \cap B(a,r) \bigr) \bigr)}{r^t} \notag \\
    &= \Theta^{*t}(m|_{E\pois M},a) = 0
  \end{align}
  and $V$ is a $(t,l)$--tangent plane for $E$ at $a$.
\end{proof}

With this dichotomy in mind it is enough to study what happens if
one point of $E$ has a $(t,l)$--tangent plane.

\begin{theorem}
  Suppose CIFS is regular, $t=\dimh(E)$ and $0<l<d$. If one point
  of $E$ has a $(t,l)$--tangent plane then $m$--almost all of $E$
  is contained in the set $f(V)$, where $V \in G(d,l)$ and $f$ is some
  conformal mapping.
\end{theorem}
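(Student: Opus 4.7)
Let $a = \pi(\iii) \in E$ have $(t,l)$-tangent plane $V$, write $r_n = \|\fii_{\iii|_n}'\|$, and let $g_n = \fii_{\iii|_n}^{-1}$. The plan is to pull the tangent plane condition back through the inverses $g_n$, normalize these conformal maps to extract a convergent subsequence with non-degenerate conformal limit $\psi$, and then conclude that $\psi$ sends $m$-almost every point of $E$ into $V$.

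Since $a \in \fii_{\iii|_n}(E) \subset B(a, Dr_n)$ by (\ref{eq:bigD}), the tangent plane condition gives $m\bigl(\fii_{\iii|_n}(E) \pois X(a,V,\delta)\bigr) = o(r_n^t)$ for every fixed $0<\delta<1$. Applying (\ref{eq:inverse}) to $A = \fii_{\iii|_n}(E)\pois X(a,V,\delta) \subset \fii_{\iii|_n}(X)$ yields
\begin{equation}
  m\bigl(E \pois g_n\bigl(X(a,V,\delta)\cap\fii_{\iii|_n}(X)\bigr)\bigr) \le K^t r_n^{-t}\, m(A) \longrightarrow 0
\end{equation}
as $n \to \infty$.

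Now normalize by setting $\psi_n(x) = (\fii_{\iii|_n}(x)-a)/r_n$ for $x \in X$. The images $\psi_n(X)$ lie in a fixed bounded set by (\ref{eq:bigD}), and BDP together with conformality gives $1/K \le |\psi_n'(x)| \le 1$ uniformly in $n$ and $x$. Each $\psi_n$ is the restriction of a M\"obius transformation when $d\ge 3$ (Liouville), and an injective analytic map when $d=2$; in either case $\{\psi_n\}$ is precompact for locally uniform convergence, and Hurwitz's theorem guarantees that any subsequential limit $\psi$ is a non-constant conformal injection. Since cones are scale-invariant, $g_n\bigl(X(a,V,\delta)\cap\fii_{\iii|_n}(X)\bigr) = \psi_n^{-1}(X(0,V,\delta))$, and the estimate above rewrites as $m\bigl(\{x \in E : \psi_n(x) \notin X(0,V,\delta)\}\bigr) \to 0$ for every $\delta$.

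Fix a decreasing sequence $\delta_j \searrow 0$ and extract inductively a subsequence $n_k$ realizing both locally uniform convergence $\psi_{n_k} \to \psi$ and the summable bound $m\bigl(\{x \in E : \psi_{n_k}(x) \notin X(0,V,\delta_j)\}\bigr) < 2^{-k}$ for every $j \le k$. Borel--Cantelli then ensures, for $m$-a.e.\ $x \in E$ and every $j$, that $\psi_{n_k}(x) \in X(0,V,\delta_j)$ for all sufficiently large $k$; passing to the limit via $\psi_{n_k}(x) \to \psi(x)$ yields $\psi(x) \in \overline{X(0,V,\delta_j)}$ for every $j$, hence $\psi(x) \in \bigcap_j \overline{X(0,V,\delta_j)} = V$, and setting $f = \psi^{-1}$ completes the argument. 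The main technical obstacle lies precisely in coordinating this double extraction---locally uniform convergence of the $\psi_n$ and Borel--Cantelli summability across a countable family of cone parameters---while using BDP and (\ref{eq:bigD}) to keep $\psi$ genuinely non-degenerate; the case split between M\"obius $\psi$ (for $d\ge 3$) and merely analytic $\psi$ (for $d=2$) already foreshadows the ``affine subspace or sphere'' versus ``analytic curve'' dichotomy of the main theorem.
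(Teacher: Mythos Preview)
Your proof is correct and follows essentially the same route as the paper: both normalize $\fii_{\iii|_n}$ by $\|\fii_{\iii|_n}'\|$, pull the tangent-cone estimate back through \eqref{eq:inverse} and \eqref{eq:bigD}, extract a convergent subsequence of the resulting uniformly bi-Lipschitz conformal maps, and show the limit sends $m$-almost all of $E$ into $V$. The only differences are cosmetic---the paper uses Ascoli--Arzela together with V\"ais\"al\"a's quasiconformal stability results for the compactness step (rather than Liouville/Hurwitz), and handles the passage to the limit by an $\eps$-parameter argument instead of your explicit Borel--Cantelli diagonal extraction.
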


\begin{proof}
  Suppose $a \in
  E$ has a $(t,l)$--tangent plane $V \in G(d,l)$ and let $\eps > 0$.
  Now for each $j \in \N$ there exists $r_{j,0} > 0$ such that
  \begin{equation} \label{eq:measappr}
    m\bigl( B(a,r) \pois X(a,V,\tfrac{1}{j}) \bigr) \le \eps r^t
  \end{equation}
  whenever $r < r_{j,0}$. Let $\iii \in I^\infty$ be such that $\pi(\iii) =
  a$. For each $j \in \N$ choose some fixed radius $r_j < r_{j,0}$
  and $n_j \in \N$ such that
  \begin{align} \label{eq:incl}
    \fii_{\iii|_{n_j}}&(E) \subset B(a,r_j) \qquad \text{and}
    \notag \\
    \fii_{\iii|_{n_j}}&(E) \pois B(a,\tfrac{r_j}{2}) \ne
    \emptyset.
  \end{align}
  Since trivially $a \in \fii_{\iii|_n}(E)$ for all $n$, we
  have
  \begin{equation} \label{eq:appr1}
    \tfrac{r_j}{2} \le d\bigl( \fii_{\iii|_{n_j}}(E) \bigr) \le
    2r_j.
  \end{equation}
  For each $j$ define mapping $\psi_j : \R^d \to \R^d$ such that $\psi_j(x) =
  ||\fii_{\iii|_{n_j}}'||^{-1}(x-a)+a$. Then
  \begin{equation}
    \psi_j\bigl( X(a,V,\tfrac{1}{j}) \bigr) = X(a,V,\tfrac{1}{j})
  \end{equation}
  and
  \begin{equation}
    |\psi_j(x) - \psi_j(y)| = ||\fii_{\iii|_{n_j}}'||^{-1} |x-y|
  \end{equation}
  for every $x,y \in \R^d$. Now $F_j := \psi_j \pallo
  \fii_{\iii|_{n_j}}$ is clearly a conformal mapping from $\Omega$
  to $\R^d$. Since for arbitrary $x,y \in \Omega$
  \begin{align}
    K^{-1}|x-y| &\le ||\fii_{\iii|_{n_j}}'||^{-1}
    ||(\fii_{\iii|_{n_j}}^{-1})'||^{-1} |x-y| \notag \\
    &\le ||\fii_{\iii|_{n_j}}'||^{-1} |\fii_{\iii|_{n_j}}(x) - \fii_{\iii|_{n_j}}(y)|
    \\
    &= |F_j(x) - F_j(y)| \le |x-y| \notag
  \end{align}
  using BDP and mean value theorem, we notice that $F_j$
  is bi--Lipschitz with constants $K^{-1}$ and $1$ for every $j \in
  \N$. Using now Ascoli--Arzela's theorem we shall find an uniformly
  converging subsequence, say, $F_{j_k} \to F$, as $k \to \infty$.
  According to corollaries 37.3 and 13.3 of V\"ais\"al\"a \cite{v}
  we notice that $F^{-1}$ is conformal. Since
  \begin{align}
    m\bigl( E\pois F_j^{-1}\bigl( X(a,V,\tfrac{1}{j}) \bigr)
    \bigr) &= m\bigl( \fii_{\iii|_{n_j}}^{-1} \bigl( \fii_{\iii|_{n_j}}(E) \pois X(a,V,\tfrac{1}{j}) \bigr)
    \bigr) \notag \\
    &\le K^t ||\fii_{\iii|_{n_j}}'||^{-t} m\bigl( \fii_{\iii|_{n_j}}(E) \pois X(a,V,\tfrac{1}{j})
    \bigr) \notag \\
    &\le (DK)^t d\bigl( \fii_{\iii|_{n_j}}(E) \bigr)^{-t} m\bigl( B(a,r_j) \pois X(a,V,\tfrac{1}{j})
    \bigr) \\
    &\le (2DK)^t r_j^{-t} \eps r_j^t \notag
  \end{align}
  for every $j \in \N$ using (\ref{eq:inverse}), (\ref{eq:bigD}),
  (\ref{eq:incl}), (\ref{eq:measappr}) and (\ref{eq:appr1}), we conclude
  \begin{equation}
    m\bigl( E \pois F^{-1}(V+a) \bigr) \le (2DK)^t \eps.
  \end{equation}
  We finish the proof by letting $\eps \searrow 0$.
\end{proof}

Notice that the inclusion in the previous theorem
holds for the closure of $E$, since any set of full $m$--measure is
dense in $E$ and any $l$--dimensional $C^1$--submanifold is closed in
$\R^d$.
Now the main theorem follows as a corollary by just recalling that
conformal mappings are complex analytic in the plane and by
Liouville's theorem M\"obius transformations elsewhere (see theorem
4.1 of Reshetnyak \cite{res}).

\begin{remark}
  The proof of the main theorem was found in January '01 and it was
  supposed to be part of the author's thesis. Since recently
  there has been some interest for similar kind of questions (particularly
  \cite{mmu2}), it was decided to be published independently.
\end{remark}

\begin{ack}
  Author likes to thank professor Pertti Mattila for many useful
  discussions during the preparation of this note and the referee for
  pointing out couple of excellent remarks.
\end{ack}

\end{document}